\theoremstyle{plain}
\newtheorem{thm}{Theorem}
\theoremstyle{remark}
\newtheorem{rem}{Remark}
\DeclareMathOperator{\td}{d\mspace{-2mu}}
\date{Commenced on 27 January 2009 and completed on 13 February 2009 in Melbourne}
\date{}
\begin{document}

\title[Sharpening and generalizations of Carlson's double inequality]
{Sharpening and generalizations of Carlson's double inequality for the arc cosine function}

\author[B.-N. Guo]{Bai-Ni Guo}
\address[B.-N. Guo]{School of Mathematics and Informatics, Henan Polytechnic University, Jiaozuo City, Henan Province, 454010, China}
\email{\href{mailto: B.-N. Guo <bai.ni.guo@gmail.com>}{bai.ni.guo@gmail.com}, \href{mailto: B.-N. Guo <bai.ni.guo@hotmail.com>}{bai.ni.guo@hotmail.com}}
\urladdr{\url{http://guobaini.spaces.live.com}}

\author[F. Qi]{Feng Qi}
\address[F. Qi]{Research Institute of Mathematical Inequality Theory, Henan Polytechnic University, Jiaozuo City, Henan Province, 454010, China}
\email{\href{mailto: F. Qi <qifeng618@gmail.com>}{qifeng618@gmail.com}, \href{mailto: F. Qi <qifeng618@hotmail.com>}{qifeng618@hotmail.com}, \href{mailto: F. Qi <qifeng618@qq.com>}{qifeng618@qq.com}}
\urladdr{\url{http://qifeng618.spaces.live.com}}

\begin{abstract}
In this paper, we sharpen and generalize Carlson's double inequality for the arc cosine function.
\end{abstract}

\keywords{sharpening, generalization, Carlson's double inequality, arc cosine function, monotonicity}

\subjclass[2000]{Primary 33B10; Secondary 26D05}

\thanks{The second author was partially supported by the China Scholarship Council}

\thanks{This paper was typeset using \AmS-\LaTeX}

\maketitle

\section{Introduction and main results}

In~\cite[p.~700, (1.14)]{Carlson-pams-ellip} and~\cite[p.~246, 3.4.30]{mit}, it was listed that
\begin{equation}\label{Carlson's-ineq-arccos}
\frac{6(1-x)^{1/2}}{2\sqrt2\,+(1+x)^{1/2}}<\arccos x<\frac{\sqrt[3]4\,(1-x)^{1/2}}{(1+x)^{1/6}}, \quad 0\le x<1.
\end{equation}
\par
The first aim of this paper is to sharpen and generalize the right-hand side inequality in~\eqref{Carlson's-ineq-arccos} as follows.

\begin{thm}\label{Carlson-Arccos-thm-1}
For real numbers $a$ and $b$, let
\begin{equation}
f_{a,b}(x)=\frac{(1+x)^b}{(1-x)^a}\arccos x,\quad x\in(0,1).
\end{equation}
\begin{enumerate}
\item
If and only if
\begin{equation}\label{Carlson-Arccos-thm-1-a2}
(a,b)\in\biggl\{b\le\frac2\pi-a\biggr\}\cap\biggl\{a\le\frac12\biggr\},
\end{equation}
the function $f_{a,b}(x)$ is strictly decreasing;
\item
If
\begin{align}\label{Carlson-Arccos-thm-1-a1}
(a,b)&\in \biggl\{\frac2\pi-a\le b\le a-\frac4{\pi^2}\biggr\}\cup\biggl\{\frac12\le a\le b +\frac13\biggr\}\\
&\quad\cup\biggl\{\frac13<a-b<\frac4{\pi^2},a+b\ge\frac{2(a-b)^{3/2}}{\sqrt{4(a-b)-1}\,}\biggr\},
\end{align}
the function $f_{a,b}(x)$ is strictly increasing;
\item
If
\begin{equation}\label{Carlson-Arccos-thm-1-a3}
(a,b)\in\biggl\{\frac13<a-b<\frac4{\pi^2}\biggr\}\cap\biggl\{\frac2\pi-b<a\le\frac12\biggr\},
\end{equation}
the function $f_{a,b}(x)$ has a unique maximum;
\item
If
\begin{equation}\label{Carlson-Arccos-thm-1-a4}
(a,b)\in\biggl\{\frac13<a-b<\frac4{\pi^2}\biggr\}\cap\biggl\{\frac12< a\le\frac2\pi-b\biggr\},
\end{equation}
the function $f_{a,b}(x)$ has a unique minimum;
\item
If
\begin{equation}\label{Carlson-Arccos-thm-1-a5}
(a,b)\in\biggl\{\frac13<a-b<\frac4{\pi^2}\biggr\}\cap \biggl\{\frac2\pi<a+b<\frac{2(a-b)^{3/2}}{\sqrt{4(a-b)-1}\,}\biggr\} \cap\biggl\{a>\frac12\biggr\},
\end{equation}
the function $f_{a,b}(x)$ has a unique maximum and a unique minimum in sequence;
\item
The necessary condition for the function $f_{a,b}(x)$ to be strictly increasing is
\begin{equation}\label{Carlson-Arccos-thm-1-neces}
(a,b)\in\biggl\{b\ge\frac2\pi-a\biggr\}\cap\biggl\{a\ge\frac12\biggr\}.
\end{equation}
\end{enumerate}
\end{thm}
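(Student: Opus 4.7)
The plan is to reduce the monotonicity question for $f_{a,b}$ to a sign analysis of the auxiliary function
$$h_{a,b}(u) := \bigl[(a+b)+(a-b)\cos u\bigr]\frac{u}{\sin u} - 1, \qquad u \in (0, \pi/2),$$
obtained from $\operatorname{sgn} f'_{a,b}(x) = \operatorname{sgn} h_{a,b}(u)$ via logarithmic differentiation and the substitution $u = \arccos x$. The endpoint values $h_{a,b}(0^+) = 2a - 1$ and $h_{a,b}((\pi/2)^-) = \pi(a+b)/2 - 1$ are precisely the quantities controlled by the hypotheses ``$a \lessgtr 1/2$'' and ``$a+b \lessgtr 2/\pi$'' appearing in all six cases, and the necessary condition (6) drops out immediately from requiring $h_{a,b} \ge 0$ at both endpoints.

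To handle the interior, I would iterate differentiation. Setting $\psi(u) := \sin^2 u \cdot h'_{a,b}(u)$, a short computation gives $\psi'(u) = \sin u \cdot \eta(u)$ with $\eta(u) := (a+b)u - 2(a-b)\sin u$, and further $\eta'(u) = (a+b) - 2(a-b)\cos u$ and $\eta''(u) = 2(a-b)\sin u$. The sign of $\eta''$ is constant on $(0,\pi/2)$, so $\eta'$ is monotone and has at most one zero there; since $\eta(0)=0$, this forces $\eta$ to have at most one zero in $(0,\pi/2)$; since $\psi(0)=0$, it then forces $\psi$ (hence $h'_{a,b}$) to have at most one zero in $(0,\pi/2)$; and therefore $h_{a,b}$ changes sign at most twice. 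This is precisely the input needed to rule out anything more intricate than the max-then-min pattern of case (5).

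It then remains to identify the thresholds separating the cases. The value $a - b = 1/3$ arises from $h''_{a,b}(0^+) = (3b - a)/3$: on the critical line $a = 1/2$ (where the leading endpoint term vanishes), a second-order sign change at $u = 0^+$ occurs exactly when $b = 1/6$. The value $a - b = 4/\pi^2$ arises similarly from $h'_{a,b}((\pi/2)^-) = (a+b) - (a-b)\pi/2$ on the line $a + b = 2/\pi$. The curve $a + b = 2(a-b)^{3/2}/\sqrt{4(a-b)-1}$ is the discriminant locus of the simultaneous system $h_{a,b}(u) = 0 = h'_{a,b}(u)$: eliminating $u/\sin u$ between them produces the quadratic $(a-b)^2 c^2 + (a+b)[2(a-b)-1]c + [(a+b)^2 - (a-b)] = 0$ in $c = \cos u$, whose discriminant $(a+b)^2[1 - 4(a-b)] + 4(a-b)^3$ vanishes (for $a - b > 1/4$) precisely along the stated curve. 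With these five loci in hand, each of the possible sign patterns ``always negative'', ``always positive'', ``$-\!+$'', ``$+\!-$'' and ``$+\!-\!+$'' of $h_{a,b}$ corresponds to exactly one case. The main obstacle is then the bookkeeping: each parameter region in the theorem must be matched with the correct sign pattern, and in particular one must verify that the stated sufficient conditions truly force the claimed pattern rather than merely being compatible with it.
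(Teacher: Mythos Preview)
Your substitution $u=\arccos x$ and logarithmic differentiation are natural, and your computation of $\psi$, $\eta$, $\eta'$, $\eta''$ is correct; so is your derivation of the discriminant curve $a+b=2(a-b)^{3/2}/\sqrt{4(a-b)-1}$ via the quadratic in $c=\cos u$. However, there is a structural problem with your choice of auxiliary function. The paper works with
\[
g_{a,b}(x)=a+b+(a-b)x-\frac{\sqrt{1-x^2}}{\arccos x},
\]
and its key step is to prove $g_{a,b}''>0$ on $(0,1)$, i.e.\ $g_{a,b}$ is strictly convex. Convexity is what makes the case analysis clean: the unique critical point (if any) is a \emph{minimum}, so the maximum of $g_{a,b}$ is attained at an endpoint, and hence ``both endpoint values $\le 0$'' already forces $g_{a,b}\le 0$ (case~(1) sufficiency), while the thresholds $a-b=\tfrac13$ and $a-b=\tfrac4{\pi^2}$ appear for \emph{all} $(a,b)$ as the two limits $g_{a,b}'(1^-)=a-b-\tfrac13$ and $g_{a,b}'(0^+)=a-b-\tfrac4{\pi^2}$.

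Your $h_{a,b}(u)=(u/\sin u)\,g_{a,b}(\cos u)$ has the same sign as $g_{a,b}$, but by multiplying through by the strictly increasing factor $u/\sin u$ you have destroyed convexity: all you can extract is ``$h'$ has at most one zero'', which does \emph{not} tell you whether that critical point is a max or a min. Concretely, take $a=-1$, $b=-\tfrac14$ (which lies in region~(1)). Then $h(0^+)=-3$, $h(\pi/2)\approx-2.96$, and your chain shows $h'$ is positive near $0$ and negative at $\pi/2$, so $h$ has an interior \emph{maximum}. Nothing in your framework rules out this maximum being positive; it is negative only because $g_{a,b}<0$, which is exactly the convexity statement you have bypassed. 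The same gap blocks the sufficiency parts of case~(2).

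Relatedly, your explanation of the thresholds $\tfrac13$ and $\tfrac4{\pi^2}$ is not right. Since $h'(0^+)=0$ identically, the local sign of $h'$ near $u=0$ is governed by $h''(0^+)=(3b-a)/3$, which vanishes on the line $a=3b$, not on $a-b=\tfrac13$; your argument only recovers $a-b=\tfrac13$ at the single point $(a,b)=(\tfrac12,\tfrac16)$. Likewise $h'(\pi/2)=(a+b)-(a-b)\pi/2$ gives the line $a+b=(a-b)\pi/2$, not $a-b=\tfrac4{\pi^2}$. These thresholds are intrinsic to $g'$, not to $h'$.

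The fix is simple: do not multiply through by $u/\sin u$. Work instead with $\tilde h(u)=(a+b)+(a-b)\cos u-(\sin u)/u$ (which is just $g_{a,b}(\cos u)$), or with $g_{a,b}(x)$ directly as the paper does; then the second derivative is positive, and your endpoint calculations together with the discriminant curve finish the proof exactly as you outlined.
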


As direct consequences of the monotonicity of the function $f_{a,b}(x)$, the following inequalities may be deduced.

\begin{thm}\label{Carlson-Arccos-cor-1}
For $x\in(0,1)$, the double inequality
\begin{equation}\label{Carlson-Arccos-ineq-1}
\frac\pi2\cdot\frac{(1-x)^{1/2}}{(1+x)^b}<\arccos x<2^{b+1/2}\cdot\frac{(1-x)^{1/2}}{(1+x)^b}
\end{equation}
holds provided that $b\ge\frac16$.
\par
The right-hand side inequality in~\eqref{Carlson-Arccos-ineq-1} is valid if and only if $b\ge\frac16$.
\par
The reversed version of~\eqref{Carlson-Arccos-ineq-1} is valid provided that $b\le\frac2\pi-\frac12$.
\par
The reversed version of the left-hand side inequality in~\eqref{Carlson-Arccos-ineq-1} is valid if and only if $b\le\frac2\pi-\frac12$.
\par
If $(a,b)$ satisfies~\eqref{Carlson-Arccos-thm-1-a3}, $16ab(b-a)+(a+b)^2>0$ and
\begin{equation}
x_1=\frac{(a+b)(2b-2a+1)-\sqrt{16ab(b-a)+(a+b)^2}\,}{2(a-b)^2}>0,
\end{equation}
then
\begin{multline}
\left.\begin{aligned}
&\min\biggl\{2^{b+1/2},\frac\pi2\biggr\}\frac{(1-x)^a}{(1+x)^b},&a&=\frac12\\
&0,&a&<\frac12
\end{aligned}\right\}
\le\arccos x\\
\le\frac{(1+x_1)^{b+1/2}(1-x_1)^{1/2-a}}{a+b+(a-b)x_1}\cdot\frac{(1-x)^a}{(1+x)^b}.
\end{multline}
\par
If $(a,b)$ satisfies~\eqref{Carlson-Arccos-thm-1-a4}, $16ab(b-a)+(a+b)^2>0$ and
\begin{equation}
x_2=\frac{(a+b)(2b-2a+1)+\sqrt{16ab(b-a)+(a+b)^2}\,}{2(a-b)^2}\in(0,1),
\end{equation}
then
\begin{equation}
\arccos x \ge\frac{(1+x_2)^{b+1/2}(1-x_2)^{1/2-a}}{a+b+(a-b)x_2}\cdot\frac{(1-x)^a}{(1+x)^b}.
\end{equation}
\end{thm}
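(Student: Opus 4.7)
The plan is to deduce all six assertions of Theorem~\ref{Carlson-Arccos-cor-1} from the monotonicity/extremum classification of $f_{a,b}$ in Theorem~\ref{Carlson-Arccos-thm-1}, together with two explicit ingredients: the limits of $f_{a,b}$ at the endpoints of $(0,1)$ and the value of $f_{a,b}$ at any interior critical point. Since $\arccos 0=\pi/2$ we have $f_{a,b}(0^+)=\pi/2$ for every $(a,b)$, and from the asymptotic $\arccos x=\sqrt{2(1-x)}\,(1+o(1))$ as $x\to1^-$ we get $\lim_{x\to1^-}f_{1/2,b}(x)=2^{b+1/2}$. These supply the two boundary constants appearing in~\eqref{Carlson-Arccos-ineq-1}.

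For the first block of claims: when $b\ge1/6$ the pair $(1/2,b)$ satisfies $1/2\le a\le b+1/3$, so by case~(2) of Theorem~\ref{Carlson-Arccos-thm-1} the function $f_{1/2,b}$ is strictly increasing on $(0,1)$, and sandwiching by its endpoint limits yields~\eqref{Carlson-Arccos-ineq-1}. The reversed inequality under $b\le2/\pi-1/2$ follows identically from the decreasing region in case~(1). For the ``only if'' directions one checks that if $b<1/6$ then either $b\le2/\pi-1/2$ (so $f_{1/2,b}$ is decreasing, forcing $f_{1/2,b}>2^{b+1/2}$ throughout and destroying the right-hand bound) or $b\in(2/\pi-1/2,1/6)$, which puts $(1/2,b)$ into the ``unique maximum'' case~(3) so that $f_{1/2,b}$ attains an interior value strictly larger than $\lim_{x\to1^-}f_{1/2,b}=2^{b+1/2}$, again violating the right-hand bound. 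The analogous dichotomy treats the reversed left-hand inequality.

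For the bounds involving $x_1$ and $x_2$, let $x_c\in(0,1)$ denote the unique critical point in case~(3) or~(4). Solving $f'_{a,b}(x_c)=0$ from the logarithmic derivative gives the identity
\begin{equation*}
\arccos x_c=\frac{\sqrt{1-x_c^2}}{(a+b)+(a-b)x_c},
\end{equation*}
and substituting back yields $f_{a,b}(x_c)=G(x_c)$, where
\begin{equation*}
G(x):=\frac{(1+x)^{b+1/2}(1-x)^{1/2-a}}{(a+b)+(a-b)x}.
\end{equation*}
A logarithmic derivative computation shows that $G'(x)=0$ reduces, after clearing denominators, to the quadratic
\begin{equation*}
(a-b)^2x^2-(a+b)(2b-2a+1)x+(a+b)^2-(a-b)=0,
\end{equation*}
whose discriminant is precisely $(a+b)^2+16ab(b-a)$ and whose roots are the points $x_1$ and $x_2$ of the statement. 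A sign analysis of $G'$ under the hypotheses in~\eqref{Carlson-Arccos-thm-1-a3}--\eqref{Carlson-Arccos-thm-1-a4} identifies $x_1$ as the global maximizer of $G$ on $(0,1)$ in case~(3) and $x_2$ as the global minimizer in case~(4); hence $f_{a,b}(x_c)=G(x_c)\le G(x_1)$, respectively $\ge G(x_2)$, and multiplying by $(1-x)^a/(1+x)^b$ produces the asserted bounds. The two-sided bound in case~(3) is closed off by observing that the infimum of $f_{a,b}$ on $(0,1)$ equals $\min\{f_{a,b}(0^+),f_{a,b}(1^-)\}=\min\{\pi/2,2^{b+1/2}\}$ when $a=1/2$ and drops to $0$ when $a<1/2$.

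The main obstacle is the sign analysis underpinning the identification of $x_1$ (or $x_2$) with the global extremum of $G$ on $(0,1)$. One must verify that the chosen root actually lies in $(0,1)$---which is precisely where the hypotheses $x_1>0$ and $x_2\in(0,1)$ enter the statement---that the competing root of the quadratic either falls outside $(0,1)$ or is a local extremum of the wrong type, and that the boundary behavior of $G$ at $0$ and at $1^-$ cannot produce a larger value than the relevant $G(x_i)$; this necessitates a case split along the parameter regions described in~\eqref{Carlson-Arccos-thm-1-a3}--\eqref{Carlson-Arccos-thm-1-a4}. A secondary subtlety is the ``only if'' step above, where one must confirm that the interior extremum of $f_{1/2,b}$ in the adjacent regime genuinely overshoots the endpoint limit $2^{b+1/2}$ (respectively $\pi/2$); this is a short continuity-plus-perturbation argument anchored at the boundary cases $b=1/6$ and $b=2/\pi-1/2$.
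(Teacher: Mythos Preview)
Your plan is essentially the paper's: compute the endpoint limits $f_{a,b}(0^+)=\pi/2$ and $f_{1/2,b}(1^-)=2^{b+1/2}$, invoke the monotonicity cases of Theorem~\ref{Carlson-Arccos-thm-1} for the double inequality and its reverse, and for the extremum bounds pass through the critical-point identity $\arccos\xi=\sqrt{1-\xi^2}/[(a+b)+(a-b)\xi]$, the auxiliary function $G(\xi)=f_{a,b}(\xi)$, its quadratic critical equation with discriminant $(a+b)^2+16ab(b-a)$, and the identification of $x_1,x_2$ as the relevant extrema of $G$.

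The one place where you diverge from the paper is the ``only if'' step. The paper establishes necessity of $b\ge1/6$ (resp.\ $b\le2/\pi-1/2$) by rewriting the inequality as a bound on $b$ and computing the limit of the right-hand side as $x\to1^-$ (resp.\ $x\to0^+$) via repeated L'H\^opital. You instead reuse the classification of Theorem~\ref{Carlson-Arccos-thm-1}: for $b<1/6$ the pair $(1/2,b)$ lands either in the decreasing region (so $f_{1/2,b}>2^{b+1/2}$ everywhere) or in the unique-maximum region~\eqref{Carlson-Arccos-thm-1-a3} (so the interior maximum exceeds the right endpoint limit $2^{b+1/2}$). This is correct and arguably tidier, since in case~(3) the function increases then decreases, making the overshoot at the maximum immediate---no ``continuity-plus-perturbation'' is actually needed, and you can drop that caveat. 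The paper's limit computation has the advantage of being self-contained (it does not appeal back to the fine case structure of Theorem~\ref{Carlson-Arccos-thm-1}), while your route extracts more mileage from that theorem.
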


The second aim of this paper is to sharpen and generalize the left-hand side inequality in~\eqref{Carlson's-ineq-arccos} as follows.

\begin{thm}\label{Carlson-Arccos-cor-2}
For $x\in(0,1)$, the function
\begin{equation}
F_{1/2,1/2,2\sqrt2\,}(x)=\frac{2\sqrt2\,+(1+x)^{1/2}}{(1-x)^{1/2}}\arccos x
\end{equation}
is strictly decreasing. Consequently, the double inequality
\begin{equation}\label{Carlson-Arccos-ineq-2}
\frac{6(1-x)^{1/2}}{2\sqrt2\,+(1+x)^{1/2}}<\arccos x<\frac{\bigl(1/2+\sqrt2\,\bigr)\pi(1-x)^{1/2}}{2\sqrt2\,+(1+x)^{1/2}}
\end{equation}
holds on $(0,1)$ and the constants $6$ and $\bigl(\frac12+\sqrt2\,\bigr)\pi$ in~\eqref{Carlson-Arccos-ineq-2} are the best possible.
\end{thm}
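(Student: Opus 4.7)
The plan is to show that $F_{1/2,1/2,2\sqrt2\,}$ is strictly decreasing on $(0,1)$; the double inequality together with the sharpness of the constants $6$ and $\bigl(\tfrac12+\sqrt2\,\bigr)\pi$ will then follow immediately by evaluating the boundary values $F_{1/2,1/2,2\sqrt2\,}(0)$ and $\lim_{x\to 1^-}F_{1/2,1/2,2\sqrt2\,}(x)$.

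First I would differentiate $F_{1/2,1/2,2\sqrt2\,}$ as a product of three factors and clear denominators by multiplying through by the positive quantity $2(1-x)^{3/2}\sqrt{1+x}$. Using the identity $(1-x)+(1+x)=2$, the condition $F'_{1/2,1/2,2\sqrt2\,}(x)<0$ then reduces to
\[
\bigl(1+\sqrt2\,\sqrt{1+x}\bigr)\arccos x<\sqrt{1-x}\,\bigl(2\sqrt2\,+\sqrt{1+x}\bigr),\qquad x\in(0,1).
\]
The next step is the half-angle substitution $x=\cos 2u$, $u\in(0,\pi/4)$, which gives $\sqrt{1-x}=\sqrt2\,\sin u$, $\sqrt{1+x}=\sqrt2\,\cos u$, and $\arccos x=2u$. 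After dividing by $2$, the displayed inequality becomes
\[
u(1+2\cos u)<\sin u\,(2+\cos u),\qquad u\in(0,\pi/4).
\]

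To verify this reduced inequality, set $\psi(u)=\sin u(2+\cos u)-u(1+2\cos u)$. Then $\psi(0)=0$, and a short calculation using $\cos^2 u-\sin^2 u-1=-2\sin^2 u$ gives
\[
\psi'(u)=2\sin u\,(u-\sin u),
\]
which is strictly positive on $(0,\pi/4)$ since $u>\sin u$ there. Hence $\psi>0$ on $(0,\pi/4)$, which is precisely $F'_{1/2,1/2,2\sqrt2\,}(x)<0$ for every $x\in(0,1)$.

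Finally, $F_{1/2,1/2,2\sqrt2\,}(0)=(2\sqrt2\,+1)\tfrac{\pi}{2}=\bigl(\tfrac12+\sqrt2\,\bigr)\pi$, while the standard expansion $\arccos x\sim\sqrt{2(1-x)}$ as $x\to 1^-$ gives $\arccos x/\sqrt{1-x}\to\sqrt2\,$, whence $F_{1/2,1/2,2\sqrt2\,}(x)\to 3\sqrt2\cdot\sqrt2=6$. Strict monotonicity then yields both the double inequality and the optimality of its endpoints. I expect the derivative rearrangement in the first step to be the main nuisance; once the common factor $2(1-x)^{3/2}\sqrt{1+x}$ is identified and the half-angle substitution performed, the trigonometric reduction and the sign analysis of $\psi$ are entirely elementary.
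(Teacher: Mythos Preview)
Your argument is correct. The overall strategy coincides with the paper's: differentiate $F_{1/2,1/2,2\sqrt2}$, strip off a positive factor to reduce to an inequality equivalent to
\[
\arccos x<\frac{(\sqrt{1+x}+2\sqrt2\,)\sqrt{1-x}}{1+\sqrt{2(1+x)}},
\]
prove that reduced inequality by a one-derivative monotonicity argument anchored at the endpoint $x=1$, and finish with the two boundary values. The genuine difference is in how the reduced inequality is verified. The paper stays in the $x$-variable, defines $G(x)=\arccos x-\dfrac{(\sqrt{1+x}+2\sqrt2\,)\sqrt{1-x}}{1+\sqrt{2(1+x)}}$, computes $G'(x)$ as an explicit algebraic expression, checks $G'(x)>0$, and uses $G(1^-)=0$. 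Your half-angle substitution $x=\cos 2u$ is a cleaner device: it turns the same inequality into $u(1+2\cos u)<\sin u\,(2+\cos u)$, whose derivative factors exactly as $\psi'(u)=2\sin u\,(u-\sin u)$, reducing everything to the textbook fact $u>\sin u$. The paper's route is a bit more mechanical; yours makes the reason for positivity transparent and avoids the heavier algebraic simplification of $G'(x)$.
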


\begin{rem}
From Theorem~\ref{Carlson-Arccos-cor-1}, we obtain the following two double inequalities:
\begin{gather}\label{two-double-Carlson-1}
\frac{\pi(1-x)^{1/2}}{2(1+x)^{1/6}}<\arccos x <\frac{\sqrt[3]4\,(1-x)^{1/2}}{(1+x)^{1/6}}, \quad x\in(0,1);\\
\frac{4^{1/\pi}(1-x)^{1/2}}{(1+x)^{(4-\pi)/2\pi}}<\arccos x <\frac{\pi(1-x)^{1/2}}{2(1+x)^{(4-\pi)/2\pi}}, \quad x\in(0,1).\label{two-double-Carlson-2}
\end{gather}
\par
Except that the right-hand side inequality in~\eqref{two-double-Carlson-1} and the left-hand side inequality in~\eqref{Carlson-Arccos-ineq-2} are same to the corresponding one in~\eqref{Carlson's-ineq-arccos} and that the left-hand side inequality in~\eqref{Carlson's-ineq-arccos} is better than the corresponding one in~\eqref{two-double-Carlson-2}, other corresponding inequalities in~\eqref{Carlson's-ineq-arccos}, \eqref{Carlson-Arccos-ineq-2}, \eqref{two-double-Carlson-1} and~\eqref{two-double-Carlson-2} are not included each other.
\end{rem}

\section{Proofs of theorems}

Now we are in a position to verify our theorems.

\begin{proof}[Proof of Theorem~\ref{Carlson-Arccos-thm-1}]
Straightforward differentiation yields
\begin{align}
\begin{split}\label{f(a,b)(x)}
f_{a,b}'(x)&=\frac{(1+x)^{b-1}}{(1-x)^{a+1}}(\arccos x) \biggl[a+b+(a-b)x-\frac{\sqrt{1-x^2}\,}{\arccos x}\biggr]\\
&\triangleq\frac{(1+x)^{b-1}}{(1-x)^{a+1}}(\arccos x) g_{a,b}(x),
\end{split}\\\notag
g_{a,b}'(x)&=a-b-\frac1{(\arccos x)^2}+\frac{x}{\sqrt{1-x^2}\,\arccos x},\\\notag
g_{a,b}''(x)&=\frac{(\arccos x)^2+x\sqrt{1-x^2}\,\arccos x+2x^2-2}{(1-x^2)^{3/2}(\arccos x)^3}\\\notag
&\triangleq\frac{h(x)}{(1-x^2)^{3/2}(\arccos x)^3},\\\notag
h'(x)&=\frac{(1+2x^2)}{\sqrt{1-x^2}\,}\biggl[{\frac{3x\sqrt{1-x^2}\,}{1+2x^2}-\arccos x}\biggr]\\\notag
&\triangleq\frac{(1+2x^2)}{\sqrt{1-x^2}\,}q(x),\\\notag
q'(x)&=\frac{4(1-x^2)^{3/2}}{(1+2x^2)^2}.
\end{align}
It is clear that $q'(x)$ is positive, and so $q(x)$ is increasing on $[0,1)$. By virtue of $q(1)=0$, we obtain that $q(x)<0$ on $[0,1)$, which equivalent to $h'(x)<0$ and $h(x)$ is decreasing on $[0,1)$. Due to $h(1)=0$, it follows that $h(x)>0$ and $g_{a,b}''(x)>0$ on $[0,1)$, and so the function $g_{a,b}'(x)$ is increasing on $[0,1)$. It is easy to obtain that $\lim_{x\to0^+}g_{a,b}'(x)=a-b-\frac4{\pi^2}$ and $\lim_{x\to1^-}g_{a,b}'(x)=a-b-\frac13$. Hence,
\begin{enumerate}
\item
if $a-b\ge\frac4{\pi^2}$, then $g_{a,b}'(x)>0$ and $g_{a,b}(x)$ is increasing on $(0,1)$;
\item
if $a-b\le\frac13$, then $g_{a,b}'(x)<0$ and $g_{a,b}(x)$ is decreasing on $(0,1)$;
\item
if $\frac13<a-b<\frac4{\pi^2}$, then $g_{a,b}'(x)$ has a unique zero and $g_{a,b}(x)$ has a unique minimum on $(0,1)$.
\end{enumerate}
Direct calculation gives
\begin{equation}\label{g-a,b-0-val}
g_{a,b}(0)=a+b-\frac2\pi
\end{equation}
and
\begin{equation}\label{g-a,b-1-val}
\lim_{x\to1^-}g_{a,b}(x)=2a-1.
\end{equation}
Therefore,
\begin{enumerate}
\item
if $a-b\ge\frac4{\pi^2}$ and $a+b\ge\frac2\pi$, then $g_{a,b}(x)$ and $f'_{a,b}(x)$ are positive, and so the function $f_{a,b}(x)$ is increasing on $(0,1)$;
\item
if $a-b\ge\frac4{\pi^2}$ and $2a\le1$, then $g_{a,b}(x)$ and $f'_{a,b}(x)$ are negative, and so the function $f_{a,b}(x)$ is decreasing on $(0,1)$;
\item
if $a-b\le\frac13$ and $a+b\le\frac2\pi$, then $g_{a,b}(x)$ and $f'_{a,b}(x)$ are negative, and so the function $f_{a,b}(x)$ is decreasing on $(0,1)$;
\item
if $a-b\le\frac13$ and $2a\ge1$, then $g_{a,b}(x)$ and $f'_{a,b}(x)$ are positive, and so the function $f_{a,b}(x)$ is increasing on $(0,1)$;
\item
if $\frac13<a-b<\frac4{\pi^2}$, $a+b\le\frac2\pi$ and $a\le\frac12$, then $g_{a,b}(x)$ and $f'_{a,b}(x)$ are negative, and so the function $f_{a,b}(x)$ is decreasing on $(0,1)$;
\item
if $\frac13<a-b<\frac4{\pi^2}$, $a+b>\frac2\pi$ and $a\le\frac12$, then $g_{a,b}(x)$ and $f'_{a,b}(x)$ have a unique zero on $(0,1)$, which is a unique maximum point of $f_{a,b}(x)$ on $(0,1)$;
\item
if $\frac13<a-b<\frac4{\pi^2}$, $a+b\le\frac2\pi$ and $a>\frac12$, then $g_{a,b}(x)$ and $f'_{a,b}(x)$ have a unique zero on $(0,1)$, which is a unique minimum point of $f_{a,b}(x)$ on $(0,1)$;
\item
if $\frac13<a-b<\frac4{\pi^2}$, the minimum point $x_0\in(0,1)$ of $g_{a,b}(x)$ satisfies
$$
\frac1{\arccos x_0}=\frac{x_0+\sqrt{x_0^2+4(a-b)(1-x_0^2)}}{2\sqrt{1-x_0^2}}
$$
and the minimum of $g_{a,b}(x)$ equals
\begin{align*}
g_{a,b}(x_0)&=a+b+\biggl(a-b-\frac12\biggr)x_0-\frac12\sqrt{x_0^2+4(a-b)(1-x_0^2)}\\
&\ge a+b-\frac{2(a-b)^{3/2}}{\sqrt{4(a-b)-1}\,},
\end{align*}
which means that
\begin{enumerate}
\item
when $\frac13<a-b<\frac4{\pi^2}$ and $a+b\ge\frac{2(a-b)^{3/2}}{\sqrt{4(a-b)-1}\,}$, the functions $g_{a,b}(x)$ and $f'_{a,b}(x)$ are non-negative, and so the function $f_{a,b}(x)$ is strictly increasing on $(0,1)$;
\item
when $\frac13<a-b<\frac4{\pi^2}$, $a+b>\frac2\pi$, $a>\frac12$ and $a+b<\frac{2(a-b)^{3/2}}{\sqrt{4(a-b)-1}\,}$, the functions $g_{a,b}(x)$ and $f'_{a,b}(x)$ have two zeros which are in sequence the maximum and minimum of the function $f_{a,b}(x)$ on $(0,1)$.
\end{enumerate}
\end{enumerate}
As a result, the sufficiency for the function $f_{a,b}(x)$ to be monotonic on $(0,1)$ is proved.
\par
Conversely, if the function $f_{a,b}(x)$ is strictly decreasing, then the function $g_{a,b}(x)$ must be negative on $(0,1)$, so the quantities in~\eqref{g-a,b-0-val} and~\eqref{g-a,b-1-val} are non-positive. Hence, the condition in~\eqref{Carlson-Arccos-thm-1-a2} is also necessary.
\par
By the similar argument as above, the necessary condition~\eqref{Carlson-Arccos-thm-1-neces} follows immediately. The proof of Theorem~\ref{Carlson-Arccos-thm-1} is thus proved.
\end{proof}

\begin{proof}[Proof of Theorem~\ref{Carlson-Arccos-cor-1}]
It is easy to see that
$$
\lim_{x\to0^+}f_{a,b}(x)=\frac\pi2
$$
and
$$
\lim_{x\to1^-}f_{a,b}(x)=2^b\lim_{x\to1^-}\frac{\arccos x}{(1-x)^a} =
\begin{cases}
2^{b+1/2},&a=\frac12;\\
0,&a<\frac12;\\
\infty,&a>\frac12.
\end{cases}
$$
From Theorem~\ref{Carlson-Arccos-thm-1}, it follows that the function $f_{1/2,b}(x)$ is strictly increasing (or strictly decreasing respectively) on $(0,1)$ if $b\ge\frac16$ (or if and only if $b\le\frac2\pi-\frac12$ respectively). Consequently, if $b\ge\frac16$, then
\begin{equation}\label{frac-pi2}
\frac\pi2=\lim_{x\to0^+}f_{1/2,b}(x)<f_{1/2,b}(x)<\lim_{x\to1^-}f_{1/2,b}(x)=2^{b+1/2}
\end{equation}
on $(0,1)$, which can be rearranged as the inequality~\eqref{Carlson-Arccos-ineq-1}; if $b\le\frac2\pi-\frac12$, the inequality~\eqref{frac-pi2}, and so the inequality~\eqref{Carlson-Arccos-ineq-1}, reverses.
\par
The right-hand side inequality in~\eqref{Carlson-Arccos-ineq-1} may be rewritten as
\begin{align*}
b&>\frac{\ln\arccos x-\frac12\ln(1-x)-\frac12\ln2}{\ln2-\ln(1+x)}\\
&\to (1+x)\biggl[\frac1{\sqrt{1-x^2}\,\arccos x}-\frac1{2(1-x)}\biggr]\\
&\to\frac{2(x-1)+\sqrt{1-x^2}\,\arccos x}{(x-1)\sqrt{1-x^2}\,\arccos x} \\
&\to\frac{x\arccos x/\sqrt{1-x^2}\,-1}{(x-1)\bigl[1+(1+2x)\arccos x/\sqrt{1-x^2}\,\bigr]}\\
&\to\frac{x\arccos x/\sqrt{1-x^2}\,-1}{4(x-1)}\\
&\to\frac16
\end{align*}
as $x\to1^-$. Therefore, the condition $b\ge\frac16$ is also a necessary condition such that the right-hand side inequality in~\eqref{Carlson-Arccos-ineq-1} is valid.
\par
The reversed version of the left-hand side inequality in~\eqref{Carlson-Arccos-ineq-1} may be rearranged as
$$
b<\frac{\ln\pi-\ln2+[\ln(1-x)]/2-\ln\arccos x}{\ln(1+x)}\to\frac2\pi-\frac12
$$
as $x\to0^+$. Hence, the necessity of $\frac2\pi-\frac12$ is proved.
\par
By the equation~\eqref{f(a,b)(x)} in the proof of Theorem~\ref{Carlson-Arccos-thm-1}, it follows that the extreme points $\xi\in(0,1)$ of the function $f_{a,b}(x)$ satisfy $g_{a,b}(\xi)=0$, that is,
$$
\arccos \xi=\frac{\sqrt{1-\xi^2}\,}{a+b+(a-b)\xi},
$$
so the extremes of $f_{a,b}(x)$ equal
$$
f_{a,b}(\xi)=\frac{(1+\xi)^{b+1/2}}{(1-\xi)^{a-1/2}[a+b+(a-b)\xi]}\triangleq g(\xi)
$$
and
$$
g'(x)\triangleq\frac{(x+1)^{b-1/2}h(x)}{[a+b+(a-b)x]^2(1-x)^{a+1/2}},
$$
where
$$
h(x)=\bigl(a-b)^2x^2+(a+b)(2a-2b-1)x+(a+b)^2-a+b
$$
has two zero points $x_1$ and $x_2$ which are also the zeros of the function $g'(x)$ and the extreme points of $g(x)$ for $x\in(0,1)$.
\par
When $16ab(b-a)+(a+b)^2>0$ and $x_{1,2}\in(0,1)$, the point $x_1$ is the maximum point and $x_2$ is the minimum point of $g(x)$, so we have the inequality
\begin{equation}\label{max-min-ineq-carlson}
\frac{(1+x_2)^{b+1/2}(1-x_2)^{1/2-a}}{a+b+(a-b)x_2}\le f_{a,b}(\xi) \le\frac{(1+x_1)^{b+1/2}(1-x_1)^{1/2-a}}{a+b+(a-b)x_1}.
\end{equation}
\par
When $16ab(b-a)+(a+b)^2>0$ such that $x_1\le0$ and $x_2\in(0,1)$, the function $g(x)$ has only one minimum and the left-hand side inequality in~\eqref{max-min-ineq-carlson} is valid.
\par
When $16ab(b-a)+(a+b)^2>0$ such that $x_1\in(0,1)$ and $x_2\ge1$, the function $g(x)$ has only one maximum and the right-hand side inequality in~\eqref{max-min-ineq-carlson} is valid.
\par
When $16ab(b-a)+(a+b)^2>0$ such that $x_{2}\le0$ or $x_{1}\ge1$, the function $g(x)$ is strictly increasing on $(0,1)$; since
$$
\lim_{x\to0^+}g(x)=\frac1{a+b}\quad\text{and}\quad \lim_{x\to1^-}g(x)=
\begin{cases}
2^{b+1/2},&a=\frac12,\\
0,&a<\frac12,\\
\infty,&a>\frac12,
\end{cases}
$$
we have
\begin{equation}\label{max-min-ineq-carlson-2}
\frac1{a+b}\le f_{a,b}(\xi) \le
\begin{cases}
2^{b+1/2},&a=\frac12,\\
0,&a<\frac12,\\
\infty,&a>\frac12.
\end{cases}
\end{equation}
\par
When $16ab(b-a)+(a+b)^2>0$ such that $x_1\le0$ and $x_2\ge1$, the function $g(x)$ is strictly decreasing on $(0,1)$, and so the inequality~\eqref{max-min-ineq-carlson-2} reverses.
\par
When $16ab(b-a)+(a+b)^2\le0$, the function $g(x)$ is strictly increasing on $(0,1)$, and so the inequality~\eqref{max-min-ineq-carlson-2} holds.
\par
Under the condition~\eqref{Carlson-Arccos-thm-1-a3},
\begin{enumerate}
\item
if $16ab(b-a)+(a+b)^2>0$ and $x_1>0$, then
\begin{multline*}
\left.\begin{aligned}
&\min\biggl\{2^{b+1/2},\frac\pi2\biggr\},&a&=\frac12\\
&0,&a&<\frac12
\end{aligned}\right\}
\le\frac{(1+x)^b}{(1-x)^a}\arccos x\\
\le f_{a,b}(\xi) \le\frac{(1+x_1)^{b+1/2}(1-x_1)^{1/2-a}}{a+b+(a-b)x_1};
\end{multline*}
\item
if either $16ab(b-a)+(a+b)^2>0$ such that $x_{2}\le0$ or $x_{1}\ge1$ or $16ab(b-a)+(a+b)^2\le0$, then
\begin{multline*}
\left.\begin{aligned}
&\min\biggl\{2^{b+1/2},\frac\pi2\biggr\},&a&=\frac12\\
&0,&a&<\frac12
\end{aligned}\right\}
\le\frac{(1+x)^b}{(1-x)^a}\arccos x\\
\le f_{a,b}(\xi) \le
\begin{cases}
2^{b+1/2},&a=\dfrac12,\\[0.6em]
0,&a<\dfrac12.
\end{cases}
\end{multline*}
\end{enumerate}
\par
Under the condition~\eqref{Carlson-Arccos-thm-1-a4}, if $16ab(b-a)+(a+b)^2>0$ and $x_2\in(0,1)$, then
\begin{equation*}
\frac{(1+x)^b}{(1-x)^a}\arccos x
\ge f_{a,b}(\xi) \ge\frac{(1+x_2)^{b+1/2}(1-x_2)^{1/2-a}}{a+b+(a-b)x_2}.
\end{equation*}
Straightforward simplification completes the proof of Theorem~\ref{Carlson-Arccos-cor-1}.
\end{proof}

\begin{proof}[Proof of Theorem~\ref{Carlson-Arccos-cor-2}]
Direct computation yields
\begin{align*}
\frac{\td F_{1/2,1/2,2\sqrt2\,}(x)}{\td x}&= \frac{\bigl[1+\sqrt{2(x+1)}\,\bigr]\sqrt{1-x^2}\,}{(1+x)(x-1)^2}\\
&\quad\times\Biggl[\arccos x -\frac{\bigl(\sqrt{1+x}\,+2\sqrt{2}\,\bigr)\sqrt{1-x}\,}{1+\sqrt{2(x+1)}\,}\Biggr]\\
&\triangleq\frac{\bigl[1+\sqrt{2(x+1)}\,\bigr]\sqrt{1-x^2}\,}{(1+x)(x-1)^2}G(x),\\
G'(x)&=\frac{(x-1)\sqrt{2(1+x)}\,\bigl[\sqrt{1+x}\,-\sqrt{2}\,\bigr]} {2\sqrt{(1+x)(1-x^2)}\,\bigl[1+\sqrt{2(1+x)}\,\bigr]^2}\\
&>0.
\end{align*}
Thus, the function $G(x)$ is strictly increasing on $(0,1)$. Since $\lim_{x\to1^-}G(x)=0$, the function $G(x)$ is negative on $(0,1)$, which means that the derivative $F_{1/2,1/2,2\sqrt2\,}'(x)$ is negative and that the function $F_{1/2,1/2,2\sqrt2\,}(x)$ is strictly decreasing on $(0,1)$. Further, from
$$
\lim_{x\to0^+}F_{1/2,1/2,2\sqrt2\,}(x)=\biggl(\frac12+\sqrt2\,\biggr)\pi \quad\text{and}\quad \lim_{x\to1^-}F_{1/2,1/2,2\sqrt2\,}(x)=6,
$$
the double inequality~\eqref{Carlson-Arccos-ineq-2} and its best possibility follow.
\end{proof}

\end{document}